\documentclass{article}

\usepackage{graphics} 
\usepackage{epsfig} 
\usepackage{graphicx}
\usepackage{epstopdf}
\usepackage{subfigure}
\usepackage{amsfonts}
\usepackage{hyperref}
\usepackage{color}
\usepackage{tikz}

\usepackage{amssymb,amsmath,amsthm}
\usepackage{epsfig,longtable,setspace,color,xcolor}
\usepackage{mathtools}
\usepackage{lipsum}
\usepackage{amsfonts}

\def\RR{\mathbb R}
\def\NN{\mathbb N}
\def\vf{{\varphi}}
\def\a{{\alpha}}

\newtheorem{Theorem}{Theorem}[section]
\newtheorem{Lemma}{Lemma}[section]


 \title{Multiscale continuum-velocity kinetic model for  vehicular traffic with local and mean field interactions}

\author{J. Calvo$^a$, J. Nieto$^a$, M. Zagour$^b$}

\date{$^a$ Departamento de  Matem\'atica Aplicada and Excellence Research Unit ``Modeling Nature'',  Universidad de Granada, 18071 Granada, Spain

$^b$ Ecole Sup\'erieure de Technologie d'Essaouira, Universit\'e Cadi Ayyad,  BP. 383, Essaouira, Maroc

 \textit{To the memory of Abdelghani  Bellouquid}}

\begin{document}

\maketitle

\begin{abstract}
This paper deals with the modeling and mathematical analysis of vehicular traffic phenomena according to a kinetic theory  approach, where the microscopic state of vehicles is described by: (i) position, 
(ii) velocity, as a continuous variable, and also (iii) activity, namely a variable suited to model the quality of the driver-vehicle micro-system. 
Interactions at the microscopic scale are modeled by methods of game theory with continuous velocity variable, thus leading to the derivation of mathematical models 
within the framework of the kinetic theory of active particles. Short-range interactions and mean field interactions are modeled to depict velocity changes related to passing and clustering phenomena. Finally, we perform an analysis of a constructive method to solve the model.
\end{abstract}

\section{Introduction}\label{INTRO}

The mathematical approach to vehicular traffic modeling can be developed at three different observation and representation scales,
namely the microscopic, mesoscopic and macroscopic scales. Different mathematical structures correspond to
each type of representation:

\begin{itemize}

\item  microscopic scale: ordinary differential equations for the variable representing the  state of each vehicle, viewed as an individual entity;

\item  mesoscopic scale: integro-differential equations for a probability distribution  over the microscopic state of vehicles;

\item macroscopic scale: partial differential equations for locally averaged quantities (moments of the aforementioned probability distribution); tipically density, momentum and energy are considered.

\end{itemize}

The critical analysis proposed in the survey paper~\cite{[BD11]} confronts us with the fact that none of the aforesaid scale approaches is totally satisfactory; a multiscale approach is necessary to obtain a detailed description of the
complex dynamics of vehicles on road. Let us comment now on some references in the literature that complement this point of view.  We quote here the survey on the physics and modeling multi-particle systems~\cite{[HEL01]}. In addition, the critical paper by Daganzo~\cite{[DAG95]} has motivated a large body of research in this field. This contribution points out some drawbacks of the driver-vehicle micro-system, where interactions can even modify the behavior of the driver -whose ability is conditioned by the local flow conditions. This paper~\cite{[DAG95]} has generated various discussions and controversies documented in several papers (of which we highlight \cite{[HJ09]})and also reactions to account for the aforementioned criticisms~\cite{BBNS14,[BDF12]}. The approach of~\cite{[BDF12]} has been further developed by various papers, take for instance the proposal~\cite{[FT15]} using a discrete space variable and implementing the model over networks. {Further technical developments can be found in~\cite{[PSTV15],[PSTV17]}}.

Our paper specifically refers to~\cite{[BDF12]}, where a kinetic model has been proposed  with the following main features:

\begin{enumerate}

\item The approach is developed at the mesoscopic scale to account for the heterogeneous behavior of the driver-vehicle micro-system;

\item The velocity variable is assumed to be  discrete to overcome the difficulty that the number of  micro-systems might not be large enough to ensure continuity of the probability distributions over the microstates;

\item The quality of the road-environment conditions is modeled by a parameter that has an influence on the dynamics of interactions. Such parameter takes values in the interval $[0,1]$, where the extremes of the interval correspond to worst and best conditions respectively.

    \end{enumerate}

  Our paper is based on the kinetic theory for active particles~\cite{[BKS13]} and starts from the achievements obtained in~\cite{[BDF12]}, which, in our view, are quite interesting. We mention here the ability of\cite{[BDF12]} to reproduce the fundamental diagrams -namely mean velocity and flow versus local density- as well as clustering phenomena of vehicles with closed speeds. Our paper aims at  providing further  developments of interest for the applications. In more detail, the following modeling topics are treated:  {(i) Interactions between vehicles accounting on perceived (rather than real ones) quantities of the vehicle flow; these interactions can be both local (short-ranged) and long-ranged, (ii) role of variable road conditions, (iii) dynamics under external actions, such as presence of tollgates. In addition, we consider a continuous velocity distribution rather than discrete velocities.} 
 These new modeling features introduced in this paper constitute what we think is a deep revision of the proposal in~\cite{[BDF12]}.

More precisely, the contents of this paper are as follows: Section \ref{sec2} derives a new mathematical structure, suitable to include the aforementioned features, in addition to those already included in~\cite{[BDF12]}; Section \ref{sec3} shows how specific models can be derived, by inserting into the aforesaid structure models of interactions obtained by a detailed phenomenological interpretation of physical reality. Finally, Section \ref{sec4} provides a brief sketch on how to develop a well-posedness theory for the models introduced in Section \ref{sec3}. 

\section{Mathematical structures}\label{sec2}
 According to the kinetic theory of active particles, models are derived in two steps: The first step consists in proposing a mathematical structure able to capture the most important features of the system under consideration. Then the second step consists in deriving specific models of vehicular traffic by inserting into the aforesaid structure models for the various interactions at the microscopic scale.

 This section develops an approach to the first step by deriving a new, general structure, appropriate to include the specific features defined in Section \ref{INTRO}. The overall content is presented through a sequence of subsections going from the representation of the system to the derivation of the structure. This structure is innovative with respect to the existing literature~\cite{[BDF12]}, as it includes modeling local and long-range interactions, as well as interactions with external actions. This structure will offer the conceptual basis for the derivation of specific models.

\subsection{Non dimensional representation}
Let us consider a one-dimensional flow of vehicles along a road of length $\ell$; the road is assumed to have a single lane. 
Position and velocity variables are denoted by
$x$ and $v$. We introduce 
  $v_\ell$, a limit velocity such that  no vehicle,  simply for
mechanical reasons, can exceed, even in the most favorable environmental conditions. We also introduce $t_c$, the time 
 needed by the fastest vehicle to move along the whole length of the road, that is, $t_c := \ell / v_\ell$. This enables us to write down  dimensionless position, velocity and time variables by means of
 $$
 \bar x:= \frac{x}{\ell},\quad  \bar v:= \frac{v}{v_\ell}, \quad \bar t:=\frac{t}{t_c}\:.
 $$
 We shall drop the overlines in what follows. 
 
Dimensionless variables are used also for macroscopic bulk quantities. For instance, the local number density $\rho= \rho(t,x)$ is obtained by dividing the actual density by $\rho_M$, which is the maximum density of vehicles, corresponding to bumper-to-bumper traffic jam.

The analysis developed in what follows is based on the assumption that the state of the driver-vehicle subsystem is defined, at the microscopic scale, by the variables $(x,v,u) \in [0,1]^3$. According to the kinetic theory of active particles~\cite{[BKS13]}, $u$ is a variable that accounts for the quality of the micro-system. More precisely, $u=0$ corresponds to the worst quality, namely, motion is prevented, while $u=1$ corresponds to the best quality, i.e. an experienced driver operating in a high quality vehicle.

According to~\cite{[BKS13]}, the  driver-vehicle subsystem is an \textit{active particle}, while the internal variable is heterogeneously distributed over the active particles. In addition, the quality of the road -including environmental conditions- is accounted for by a parameter $\a \in [0,1]$, such that $\a=0$ corresponds to the worst quality that prevents motion, while $\a=1$ corresponds to the best conditions. In general $\a$ can depend on space $\a = \a(x)$ to account for the presence of curves, local restrictions, etc.

The overall state of the system is described by the distribution function over the states at the microscopic scale:
\[
f = f (t, x, v, u): \quad \mbox{\bfseries R}_+ \times [0,1] \times [0,1] \times [0,1] \, \to \, \mbox{\bfseries R}_+ \, .
\]
Here as usual we require 
$f$ to be locally integrable, so that $f(t,x,v,u) \, dx \, dv\,du$ denotes the dimensionless distribution of vehicles which at time $t$ have position $x$, velocity $v$ and activity $u$. In particular, the local density 
 is given by
\begin{equation}
\rho (t, x)=\int_0^1 \int_0^1 f(t,x,v,u)\, dv\,du.
\end{equation}
Recall that $\rho$ is normalized in term of $\rho_M$, which in turn imposes a normalization on $f$.
\subsection{Interaction domains and perceived quantities}
The car-driver subsystem, namely the active particle, has a visibility zone 
 $\Omega_v = \Omega_v(x) = [x, x + \ell_v]$, where $\ell_v$ is the visibility length on front of the vehicle. This visibility length is assumed to be proportional to  the quality of the environment $\a$ and much smaller than $\ell$. In addition, the active particle has a sensitivity zone, $\Omega_\ell= [x, x + \ell_s]$, necessary to perceive the flow conditions in $\Omega_\ell$.  In general $\Omega_\ell \subseteq \Omega_v$. However, the opposite case has to be taken into account as well, whenever local conditions of the road prevent visibility. 
 In the sequel calculations will be developed assuming that the visibility zone includes the sensitivity zone. In general,  $\Omega_\ell$ can depend on $f$, which induces an additional nonlinearity.

The driver develops its driving strategy by taking into account his perception of the state of the other vehicles in $\Omega_\ell$. Among those vehicles, the ones that are closer to the driver have a stronger influence on his strategy. To account for this, we introduce  a (much) smaller domain, say $\Omega_s$ within which active particles are supposed to perceive an approximate estimate of the local gradients. 
 Hence (see \cite{BBNS14}) active particles perceive a density $\rho_p[\rho]$ which is higher than the real one in the presence of positive gradients and lower than the real one in the presence of negative gradients. We define \textit{short range interactions} as those involving vehicles in $\Omega_s$ and \textit{long range interactions} as those involving vehicles in $\Omega_\ell \backslash \Omega_s$.

 The approach of the kinetic theory for active particles is such that interactions are modeled by evolutionary stochastic games. Three types of particles (driver-vehicle)  are involved: (i) \textit{candidate} particles  with the micro-state $\{x ,v_*,u_*\}$, (ii) \textit{field} particles (vehicles) with the state $\{x^*,v^*,u^*\}$, and (iii) the \textit{test} particle which is representative of the whole system.  Candidate particles are localized in $x$ and can acquire, with a certain probability, the state of the test particle after interactions with the field particles localized in $\Omega_s$ for short range interactions and in  $\Omega_\ell$ for long range (mean field) interactions.

 The rationale toward modeling proposed in the following is based on the assumption that the activity variable of both candidate and test particles  is not modified by interactions.

\subsection{Mean field interactions}
 The test vehicle is subject to an action of the vehicles in its sensitivity zone $\Omega_\ell$ which can induce a consensus toward a common velocity (e.g. the mean speed within the visibility domain) as well as a  clustering effect. The test vehicle is sensitive to these actions whenever the distance between its speed and the common velocity is below a certain critical threshold.

In general, mean field interactions can be modeled  by an individual-based \textit{acceleration term} $\vf(x,x^*, v,v^*,u,u^*)$ that is applied to the \textit{test} vehicle $(x,v,u)$ by all the vehicles $(x^*,v^*,u^*)$ in its sensitivity domain $\Omega_\ell$. Therefore, the overall acceleration is obtained by integration corresponding to the action of all vehicles in $\Omega_\ell$. Hence:
\begin{equation}
\label{F}
\mathcal{F}[f](t,x,v,u) = \int_{\Lambda}\vf(x,x^*, v,v^*,u,u^*) f(t,x^*,v^*,u^*)\,dx^*\, dv^*\, du^*,
\end{equation}
where $\Lambda:= \Omega_\ell \times [0,1]\times [0,1]$.

\subsection{Short range interactions}
\label{sri}

Short range interactions occur, as mentioned, in a smaller domain $\Omega_s$, sufficient for a candidate particle to perceive a density $\rho_p$ modulated by its gradient. Moreover, it is assumed that the probability distribution of field particles can be approximated by the probability distribution in $x$. 
The description of these interactions requires the modeling of two additional quantities:

 $\bullet$ \textit{The encounter rate} ${\eta[f]}$, which models the number of interactions per unit time between candidate and test particles  with field particles.

   $\bullet$  \textit{The transition probability density} $\mathcal{A}[f](v_*\! \to\! v,v^*,u)$, which defines the probability density that a candidate particle with velocity  $v_*$ falls into the state of the test particle with velocity $v$  after interacting with the field particle with velocity $v^*$.

The actual modeling of short range interactions is based on the assumption that these quantities depend not only on the microscopic state of the interacting particles, but also on the whole distribution function $f$. This dependence, which is highlighted by the square brackets, induces a nonlinearity at the microscopic scale. This nonlinearity can involve both $f$ and its gradients, as we will see in the more general model.  In addition, these interaction terms are allowed to depend on the quality of the road modeled by $\alpha \in [0,1]$. Finally, as a probability density, $\mathcal{A}$ is required to satisfy the condition:
\begin{equation}
\mathcal{A}[f; \alpha]\geq 0, \quad \int_{[0,1]}\mathcal{A}[f; \a](v_* \to v,v^*,u) dv = 1,
\label{tabPro}
\end{equation}
for all possible inputs $v_*$, $v^*$, $u$.

\subsection{Interactions with external actions}
 The test vehicle can be subject to external actions which control its velocity. As an example, tollgates indicate  the maximal speed when the vehicle approaches to the tollgate. Similarly, the exit from the tollgate indicates how the speed can increase to the standard values. The effects caused by speed limits can be also described as an external action.

 The simplest way to model this term consists in using a relaxation-type term:
 \begin{equation}
\label{T}
\mathcal{T}[f](t,x,v,u) = {\mu[f,x]}\big(f_e(x, v_e(x)) - f(t, x, v,u)\big),
\end{equation}
where $\mu [f,x]$ models the intensity of the action, which increases with $\rho$, while the given external density $f_e(x, v_e(x))$ is suppose to adapt the velocity of the vehicles to $v_e(x)$. Note that $\mu$ vanishes at those regions where external actions play no role.

\subsection{A mathematical structure toward modeling}

 This subsection shows how all action models that have been described above can be inserted into a proper mathematical structure deemed to offer the conceptual basis for the derivation of different models. This structure is obtained by a particle balance within a elementary volume in the space of microscopic states, which includes position, velocity (namely the variables of the phase space) and the activity variable. This particle balance includes the free transport term, the transitions due to long-range interactions, the dynamics of short-range interaction (consisting  in a ``gain''  and a ``loss'' term), and the trend to the speed enforced by the external actions.  The resulting structure can be written, at a formal level, as follows:
\[
\label{structure-0}
\partial_t f + v\, \partial_x f +  F [f] = J[f] + \mathcal{T}[f],
\]
where  $F$, $J$, and $\mathcal{T}$ correspond, respectively, to mean field interactions, short-range interactions, and interactions with external actions. More concretely, the structure can be written as
\begin{eqnarray}
\label{structure}
 \partial_t f(t,x,v,u) &+& v \partial_x f(t,x,v,u) + \partial_v\Big (\mathcal{F}[f](t,x,v,u)f(t,x,v,u)\Big) \nonumber \\
& = & \int_{[0,1]^3}{\eta[f]}\mathcal{A}[f; \a](v_* \to v,v^*,u)\,f(t,x,v_*,u)\, f(t, x, v^*, u^*)dv_*\,dv^*\, du^* \nonumber \\
&& - f(t, x, v, u)\!\int_{[0,1]^2} \!\!{\eta[f]}\, f(t, x, v^*, u^*)dv^*\, du^*  +{\mu[f,x]}\big(f_e(x, v_e(x)) - f\big).
\end{eqnarray}
\vskip1cm

\subsection{Critical analysis}

The mathematical structure proposed here includes those features of the complex system under consideration which, according to the authors' opinion, appear to be the most important aspects of the dynamics:  heterogeneity of the driver-vehicle subsystem, aggregation dynamics for vehicles with similar velocities, passing probabilities, variable properties of the road-environment where the dynamics occur and the role of external actions.

The structure can operate as a general framework for the derivation of models, which can be obtained by inserting models of interaction at the microscopic scale into it. These models can be obtained by a phenomenological interpretation of empirical data. The most important reference to this aim is the book by Kerner~\cite{[KER04]} -see also~\cite{[KER08]}-,  which provides an interesting variety of empirical data valid in uniform flow conditions as well as in transient conditions. The main difficulty here is that empirical data are available in steady flow conditions, while individual behaviors in unsteady conditions are quite different from  those in steady conditions. However, a sharp interpretation of data can hopefully lead to models that can be validated by the information delivered by empirical data.

\section{From mathematical structures to models} \label{sec3}
This section develops an approach to the derivation of  specific models of vehicular traffic by inserting into the structure (\ref{structure}) models of interactions at microscopic scale. This  objective is pursued by looking at the modeling of the interaction terms that characterize such structure, namely $\varphi$, $\eta$, $\mathcal{A}$, $\mu$ and $f_e$, such that a good agreement with empirical data, concerning both the fundamental diagram and the emerging behaviors in unsteady flow conditions, is provided.

\subsection{Modeling accelerations}
The acceleration term $\vf$ introduced in (\ref{F}) accounts for mean field interactions, where the test vehicle is subject to an action of the vehicles in its sensitivity zone $\Omega_\ell$ in front of it, which can induce a consensus toward a common velocity. We pose the following phenomenological interpretation of reality: {the test vehicle is sensitive to these actions provided that the distance between its speed $v$ and the observed velocity $v^*$ is below a certain critical threshold $d_c$; this acceleration decays with the distance between the test and field vehicle, taking the sign of $v^* - v$ and being proportional to the quality of both the driver-vehicle  and the road, through  $u$ and $\alpha$}.
Then, we propose the following acceleration term:
\begin{equation}\label{vphi1}
\vf(x,x^*,v,v^*,u):= 
\left\{
\begin{array}{lcl}
\displaystyle \alpha \, u\,  \frac{x^*- x}{\ell_v}  (v^* -v) & if &  |v^* - v| \leq d_c, \\
0 & if &  |v^* - v| > d_c  .
\end{array}\right.
\end{equation}
Note that our definition of $\varphi$ does not depend on $u^*$. Whenever $x^*\in \Omega_{\ell }=[x, x+\ell_v]$ we have $0\leq \frac{x^*-x}{\ell_v}\leq 1$ and taking $z^*=\frac{x^*-x}{\ell_v}$ in (\ref{F}),  
it becomes
\begin{eqnarray}
\label{acceleration2}
\mathcal{F}[f] &=& \ell_v\int_{[0,1]^3} \vf(x,x+\ell_v z^*,v,v^*,u) f(t,x+\ell_v z^*,v^*,u^*)dz^*\,dv^*\, du^* \nonumber\\
&=& \alpha \, u\, \ell_v \,\Big (\mathcal{F}_1[f](t,x) - v\ \mathcal{F}_2[f](t,x)\Big),
\end{eqnarray}
where 
\begin{eqnarray*}
\mathcal{F}_1[f](t,x) &=& \int_0^1 \int_{B^*} \int_0^1 z^* v^* f(t,\ell_v z^*+x,v^*,u^*)dz^*\,dv^*\, du^*,\\
\mathcal{F}_2[f](t,x) &=& \int_0^1 \int_{B^*} \int_0^1 z^* f(t,\ell_v z^*+x,v^*,u^*)dz^*\,dv^*\, du^*,
\end{eqnarray*}
with $B^*=\{v^*\in [0,1]: |v^*-v|<d_c  \}$. In particular,  the derivative of $\mathcal{F}$ with respect to $v$ yields
\begin{equation}
\label{accelerationII}
\partial_v(\mathcal{F}[f])(t,x,v,u) =  - \alpha\, u \, \ell_v \, \mathcal{F}_2[f](t,x).
\end{equation}

\subsection{Modeling the  perceived density}
\label{sec31}
The concept of perceived density was introduced in~\cite{[DEA99]}, where it was suggested that this quantity is greater (smaller) than the real one
whenever positive (negative) density gradients appear. Actually, the following expression can be found in \cite{BBNS14}:
\begin{equation}
\label{pd}
\rho_p[\rho]  =  \rho + \frac{\partial_x \rho}{\sqrt{1 + (\partial_x \rho)^2}}\,\big[(1- \rho)\, H(\partial_x \rho) + \rho \, H(- \partial_x \rho)\big],
\end{equation}
where  $H(\cdot)$ is the Heaviside  function. Thus,  the perceived density increases the value of $\rho$ to the largest  admissible value $\rho = 1$ in the presence of positive gradients, while in the presence of negative gradients the perceived density decreases from the actual density $\rho$ to the lowest admissible value $\rho = 0$.

In our framework, which includes mean field interactions through a force term, this type of models for the perceived density can lead to incorrect. More concretely, in previous models with discrete velocity and no acceleration, the density $\rho$ and the perceived density $\rho_p$ remain between zero and one (in a non dimensional representations), and this fact is essential to define the transition probability densities that are involved in the other terms. However, in our new framework with continuous velocity variable we cannot ensure {\em a priori} that the density $\rho$ will remain below one for all time; this is due to the presence of the new acceleration term.

Here we present a simplified scenario in which the above issue is fixed, keeping the spirit of formula (\ref{pd}) as much as possible.  For that aim we introduce a controlled  perceived density -which, by the way, does not involve gradients, thus producing a simpler mathematical  model. If we assume that the influence of  positive gradients is most relevant when: (i) the actual density is high, which tends to increase the value of the perceived density, (ii) the actual density is low, which tends to decrease the value of the perceived density, then we can propose a piecewise linear version of (\ref{pd}), see Figure \ref{fig0} below.
  Actually, we propose a perceived density ${{\rho_p}}[\rho]$ as a Lipschitz function of $\rho$ (dashed line on the picture) 
\begin{figure}[h]
\begin{center}
\begin{tikzpicture}[=>latex,thick,domain=-1:3,xscale=1.5,yscale=1,scale=1.5]
\draw[<-]  (1.6,0) -- (-0.2,0) ; 
\coordinate [label=right:{$\rho$}] (rho) at (1.6,0.1) ; 
\draw[<-] (0,1.3) -- (0,-0.2) ; 
\draw[-] (0,0) -- (1,1) ; 
\draw[dashed, color=black,line width= 1.5 pt, domain=0:0.2] plot (\x,{\x/2});
\draw[dashed, color=black,line width= 1.5 pt, domain=0.2:0.8] plot (\x,{4*(\x-1/8)/3});
\draw[dashed, color=black,line width= 1.5 pt, domain=0.8:1] plot (\x,{(1+\x)/2});
\draw[dashed, color=black,line width= 1.5 pt] (1.5,1) -- (1,1) ; 
\coordinate [label=left:{${{\rho_p}}[\rho]$}] (das) at (0.8,1) ; 
\coordinate [label=left:{$1$}] (uno) at (-0.05,1) ; 
\draw[-] (uno) -- (0.05,1) ; 
\coordinate [label=below:{$1$}] (dos) at (1,-0.05) ; 
\draw[-] (dos) -- (1,0.05) ; 
\end{tikzpicture}
\caption{Cartoon showing how to construct perceived density maps (dashed line) starting from the identity map.}
\label{fig0}
\end{center}
\end{figure}
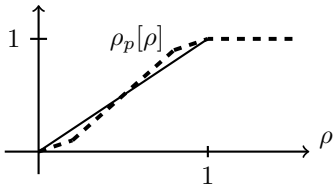
which represents the effect of negative gradients for $\rho \sim 0$,  the effect of  positive gradients for $\rho \sim 1$, and enforces the maximum perceived density to lie between zero and one. The last property will be essential in what follows to construct the transition probability densities modeling short-range interactions. 

\subsection{Modeling the encounter rate}
\label{ss:enc}

The encounter rate $\eta[f]$ introduced in section \ref{sri} refers the rate of interactions between candidate and test particles  with field particles. One can assume that this term grows with the local perceived density, starting from a minimal value corresponding to driving in vacuum conditions $\eta_0$.  We propose the following expression:
\begin{equation}
\eta[f]= \eta_0 \, (1 + \gamma_\eta \, {{\rho_p}}[\rho]),
\end{equation}
where $\gamma_\eta$ is the growth coefficient  and ${{\rho_p}}$ is the perceived density defined in the previous paragraph.

\subsection{Modeling short range interactions}

Let us now consider the \textit{transition probability density}  $\mathcal{A}[f](v_* \to v,v^*,u)$, which models short-range interactions. As said before, this term defines the probability density for a candidate particle with the state $\{x, v_*, u\}$ to fall into the state of the test particle $\{x, v, u\}$ after interaction with the field particles with the state $\{x, v^*, u^*\}$. These notations indicate that the activity variable is not modified by the interaction which, however, modifies the speed.

The modeling approach proposed in our paper is based on the following assumptions, which actually represent, from a continuous point of view, the velocity-discrete table of games introduced in~\cite{[BDF12]}:

 \begin{enumerate}
	\item Short range interactions do not modify the activity, but only the speed.

\item A priori, $\mathcal{A}$ should   depend on the velocities of the interacting particles, on the perceived density, on the activity, and on the quality of the road, i.e. $\mathcal{A}= \mathcal{A}(v_*\to v,v^*,u, \a, {{\rho_p}})$. Here the dynamics is enhanced by the product $\a\, u$, while it is limited by the  perceived density- actually, it is totally prevented whenever ${{\rho_p}} =1$.
\item We assume that the candidate particle with velocity $v_*$, after interacting with the field particles  with velocity $v^*$,  reaches a  new velocity $v$ that belongs to the interval $[v_m, v_M]$  given by 
\begin{equation}
v_m=\max\{0,\min\{v_*, v^*\}-\kappa(1-\alpha u){{\rho_p}}(|v_*-v^*|+\exp(-|v_*-v^*|))\},
\end{equation}
\begin{equation}
v_M=\min\{1,\max\{v_*, v^*\}+\kappa\alpha u(1-{{\rho_p}})(|v_*-v^*|+\exp(-|v_*-v^*|))\}\:.
\end{equation}
In the above $\kappa 
$ is a small constant; this enables $v_m$ to be close to the $\min\{v_*, v^*\}$ and $v_M$ to be close to the $\max\{v_*, v^*\}$. Note that these choices of $v_m$ and $v_M$ guarantee the fulfillment of the condition 
$0< v_m <v_M<1$ even if $v_*=v^*$.
\end{enumerate}

To proceed, we must consider the following two scenarios separately:

\subsubsection{Interaction with faster particles}  If $v_*\leq v^*$, the candidate particle has a trend to increase its speed. The probability of this event taking place decreases  with $v - v_m$ where $v \in [v_m, v_M]$. We propose the following probability density, 

\[
\mathcal{A}[f]=(1-\alpha u(1-{{\rho_p}}))\frac{e^{-\frac{|v-v_*|^2}{\sigma_1}}}{Z(\sigma_1)}+ \alpha u(1-{{\rho_p}})\frac{(v_M-v)^2}{Z_2}, \qquad  v \in [v_m,v_M],
\]
and zero otherwise. This generalizes the table of games defined in the paper~\cite{[BDF12]} in the case of discrete velocity and the activity variables. Here the variance $\sigma_1$ represents a tendency of the particle to modify its velocity under good conditions, and it is given by $\sigma_1=\kappa\alpha u (1-{{\rho_p}})$ with $\kappa<<1$. Moreover, $Z(\sigma)= \int_{v_m}^{v_M}e^{-\frac{|v-v_*|^2}{\sigma}}dv$ and $Z_p= \int_{v_m}^{v_M}(v_M-v)^pdv$ are the respective partition functions, which enforce the normalization condition $\int_0^1 {\cal A}\,  dv =1$.
We note that this probability density has analogous  nonlinear behavior as of the table of games in~\cite{[BDF12]}. More precisely, in good road conditions $\alpha$ and activity $u$, the candidate particle has a tendency to accelerate and reach new velocities greater than its pre-interaction velocity $v_*$. Quite the contrary, decreasing the value of $\alpha$ or $u$ decreases the probability to accelerate (see Figure~\ref{Fig1}-left).

\subsubsection{Interaction with slower particles}  When $v_*> v^*$, the candidate particle has a tendency to decrease its speed. In order to reproduce the behavior of the discrete table of games by~\cite{[BDF12]}, we propose the following dichotomy function as the corresponding probability density ruling these interactions:
    \[
  \mathcal{A}[f]=\alpha u(1-{{\rho_p}})\frac{e^{-\frac{|v-v_*|^2}{\sigma_2}}}{Z(\sigma_2)}+ (1-\alpha u(1-{{\rho_p}}))\frac{v_M-v}{Z_1}. \qquad  v \in [v_m,v_M]
  \]
  Note that $\mathcal{A}[f]$ is set to zero for $v \notin [v_m,v_M]$. 
 Here we have $\sigma_2=\kappa(1-\alpha u (1-{{\rho_p}}))$; the partition functions are defined as in the previous paragraph. We remark that if $\alpha\to 0$ or $u\to 0$, the probability to decelerate is greater than the probability to maintain the velocity. On the contrary, if $\alpha\to 1$ and $u\to 1$, the candidate particle has tendency to maintain it velocity (see Figure \ref{Fig1}-right).
\begin{figure}[h!]
\centering
	 \includegraphics[width=7.5 cm,height=3.5cm]{./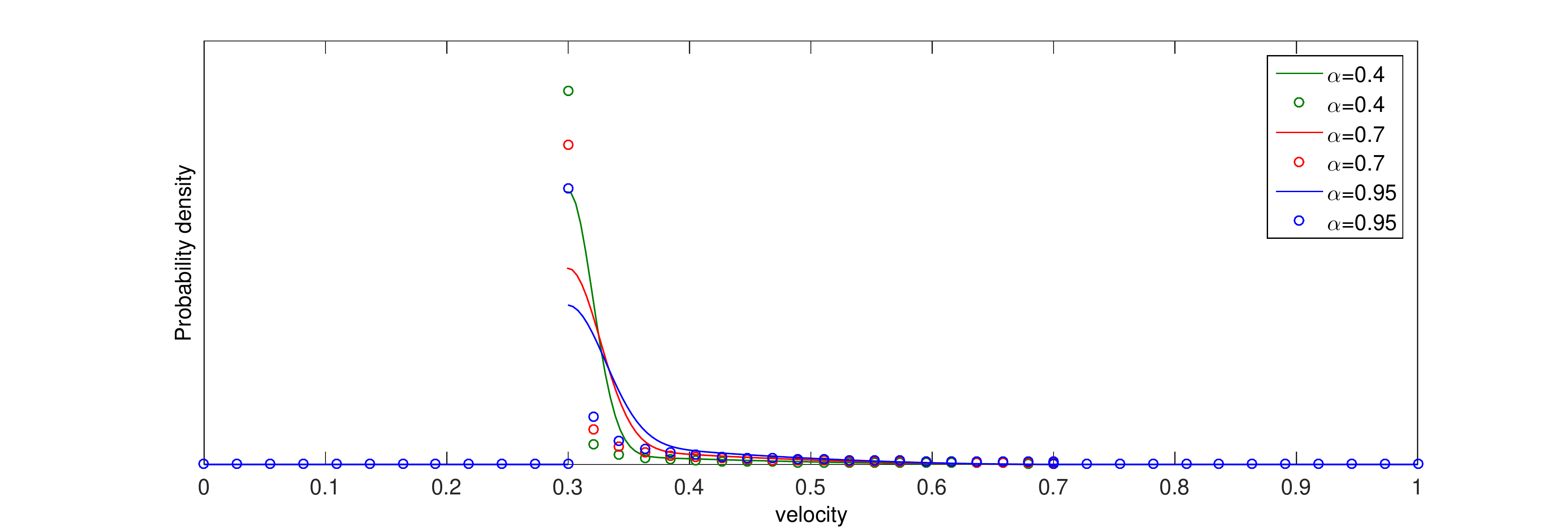}
	 \includegraphics[width=7.5 cm,height=3.5cm]{./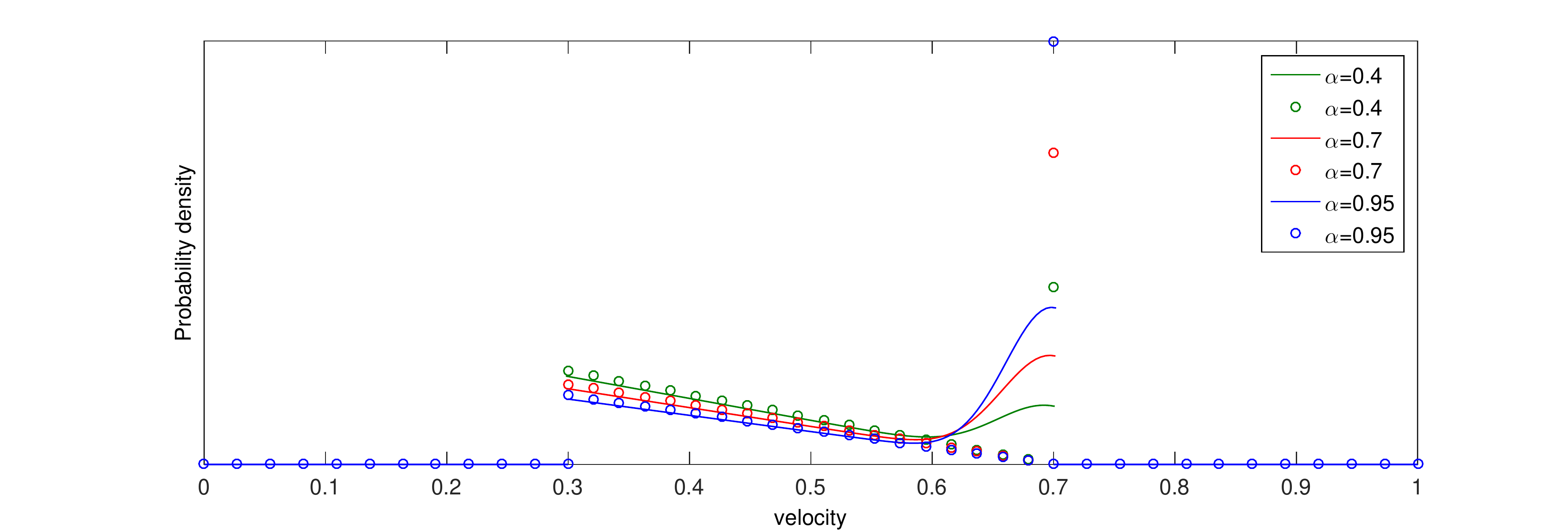}
	\caption{Comparison between discrete (from \cite{[BDF12]}) and continuous transition probability densities $\mathcal{A}$. Left: interaction with faster particles: $0.3=v_*< v^*=0.7$, $u=1$ and ${{\rho_p}}=0.6$. Right: interaction with slower particles: $0.7=v_*> v^*=0.3$, $u=1$ and ${{\rho_p}}=0.6$.}
	\label{Fig1}
\end{figure}

Now the global transition probability density $\mathcal{A}[f]=\mathcal{A}[f](v_* \to v,v^*,u)$ can be rewritten accordingly:
\begin{eqnarray}
\mathcal{A}[f]
&=&
\bigg[H(v^*-v_*)\Big((1-P)\frac{e^{-\frac{|v-v_*|^2}{\sigma_1}}}{Z(\sigma_1)}+ P\frac{(v_M-v)^2}{Z_2}\Big)\nonumber \\
\label{Table2}
&&
\hspace{-2 cm}+(1-H(v^*-v_*))\Big(P\frac{e^{-\frac{|v-v_*|^2}{\sigma_2}}}{Z(\sigma_2)}+ (1-P)\frac{v_M-v}{Z_1}\Big)\bigg]\chi_{[v_m,v_M]}(v).
\end{eqnarray}
We use the shorthand notation $P=\alpha u(1-{{\rho_p}})$, being $H(\cdot)$  is the Heaviside  function and $\chi_B$ the characteristic function of a set $B$.

\subsection{Modeling external action}

Let us now consider the modeling of external action, which enforce a prescribed speed, as it occurs for example in the presence of  tollgates.
The structure of this term is reported in Eq.~(\ref{T}), where  $f_e$ is a given function (for instance a step-wise function) of the prescribed velocity $v_e= v_e(x)$. Therefore, the simplicity of Eq.~(\ref{T}) entails that only the rate $\mu$ needs to be modeled. Following the same rationale applied to the encounter rate $\eta$ in Section \ref{ss:enc}, the following expression can be used:
\begin{equation}
{\mu[f,x]}= \mu_0(x) \, (1 + \gamma_\mu \, {{\rho_p}}[\rho]),
\label{mu}
\end{equation}
where $\gamma_\mu$ is the growth coefficient and ${{\rho_p}}$ is the perceived density, whereas $\mu_0$ accounts for the spatial localization of the external actions.

\subsection{Parameters and final model}
In this section we have proposed a simple modeling of the interaction terms to be implemented into the structure provided by Eq.~(\ref{structure}). The model includes the following parameters, which relate to specific different phenomena in vehicular traffic flows:
\begin{itemize}
	\item \vskip.2cm \noindent $\a$ models the quality of the road-weather conditions;
	\item \vskip.2cm \noindent $\ell_v$ is the length of the sensitivity zone $\Omega_\ell$. It depends on the quality of the road-weather conditions $\a$ by means of $\ell_v = \a L$;
	
	\item\vskip.2cm \noindent $\eta_0$ is the minimal value of the encounter rate; 
	
	\item \vskip.2cm \noindent $\gamma_\eta$ and $\gamma_\mu$ which are, respectively, the growth coefficients of the encounter rate $\eta$ and the intensity of the action $\mu$.
\end{itemize}
Bearing in mind the proposed modeling of the interaction terms, we get our definitive traffic model:
\begin{eqnarray}
	\label{DerivedModel}
	&&\hspace{-1.5 cm} \partial_t f(t,x,v,u) + v \partial_x f(t,x,v,u) + \partial_v \Big( \mathcal{F}[f](t,x,v,u)\partial_vf(t,x,v,u)\Big)  \\
	&=&  \int_{[0,1]^3}{\eta[f]}\mathcal{A}[f](v_*\to  v)\,f(t,x,v_*,u)\, f(t, x, v^*, u^*)dv_*\,dv^*\, du^*\nonumber \\
	&&  -f(t, x, v,u)\int_{[0,1]^2} {\eta[f]}\, f(t, x, v^*, u^*)dv^*\, du^*
	+{\mu[f,x]}\Big(f_e(x, v_e(x)) - f(t, x, v,u)\Big).\nonumber
	\end{eqnarray}

It is worth mentioning that in this paper we have proposed a probability density~(\ref{Table2}) by drawing inspiration on the table of games proposed by~\cite{[BDF12]}, that features {discrete velocity and activity variables}. Moreover, we {included} nonlinear interactions by taking into account the perceived density ${{\rho_p}}$ and mean field interactions, which are considered as one of the paradigms of complexity.  A few works proposing to model the probability density in terms of a continuous velocity variable can be found in the vehicular traffic literature. Namely, the recent work by~\cite{[PSTV17]}. In this paper, the authors proposed the quantified acceleration referred as $\delta$ model, in which the post-interactions after an acceleration is obtained by a velocity jump. The second model is based on the paper by~\cite{[KW97]} which assumed that the interaction result between the candidate and field particles is uniformly distributed in a velocity interval.

\section{Constructive iterative method of solutions} 
\label{sec4}
In this section we propose a scheme to construct a solution of  (\ref{DerivedModel}) by means of an iterative method. We shall pose the (linear) approximating problems on the whole space $[0,\infty]\times{\mathbb R}^3$ in order to rely on well-known results for linear transport equations and then we will prove that this amounts to a periodic extension of the linearization of the original problem.  
minimum of mathematical requirements. 
Thus, we will first consider a linearized version of model (\ref{DerivedModel}) on $[0,\infty]\times{\mathbb R}^3$ and state some properties that will enable us to construct approximate solutions.

\subsection{Linearized transport model} 

As our starting point we consider the following  transport model on $(t,x,v,u)\in [0,\infty]\times{\mathbb R}^3 $:
\begin{equation}
\label{uno}
\partial_t f+ v \partial_x f + \partial_v \big( \mathcal{F}[h]f\big) = g ,\qquad f(0,x,v,u)=f_0(x,v,u)\:.
\end{equation}
Here $h$ and $g$ are given; these are smooth functions on $[0,\infty]\times{\mathbb R}^3$. We assume those to be 1-periodic on $x$ and also to vanish for $(v,u)\notin [0,1]^2$. The initial data  $f_0$ is assumed to be smooth, 1-periodic on $x$ and compactly supported on $(v,u) \in [0,1]^2 $. The acceleration term $\mathcal{F}[h]$ is given by (\ref{acceleration2}) for $(t,x,v,u)\in [0,\infty]\times\RR^3$ and in this fashion it is 1-periodic on $x$.  
Note that our original model (\ref{DerivedModel}) is posed on $(t,x,v,u)\in [0,\infty]\times[0,1]^3$; we start by proving that, under natural hypothesis, solutions of (\ref{uno}) preserve the periodicity property on $x$ and the support property for $v$ and $u$.

\begin{Lemma}[Periodicity and support]
\label{lemma1}
Let $h$, $g$ and $f_0$ be smooth functions, vanishing for $(v,u)\notin [0,1]^2$ and 1-periodic on $x$. Then, the function $f$ verifying 
 (\ref{uno}) does also vanish  for $(v,u) \notin [0,1]^2$ and is 1-periodic on $x$, for all $t\geq 0$.
 \end{Lemma}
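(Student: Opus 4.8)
The plan is to argue entirely via the method of characteristics, which is legitimate here because $h$ is smooth and vanishes for $(v,u)\notin[0,1]^2$, so that the acceleration $\mathcal{F}[h]$ defined in (\ref{acceleration2}) is bounded, globally Lipschitz in $(x,v,u)$ and $1$-periodic in $x$ (the velocity truncation set $B^*$ has endpoints moving at unit speed in $v$, which costs no more than Lipschitz regularity). Hence the characteristic system $\dot x=v$, $\dot v=\mathcal{F}[h](s,x,v,u)$, $\dot u=0$ has a unique global flow, and, expanding the divergence term in (\ref{uno}), the solution evaluated along a characteristic $s\mapsto(x(s),v(s),u)$ satisfies the scalar linear ODE $\tfrac{d}{ds}f=-\big(\partial_v\mathcal{F}[h]\big)f+g$. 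By Duhamel, $f$ at a point $(t,x,v,u)$ is therefore completely determined by the value of $f_0$ at the foot of the characteristic through that point together with an integral of $g$ along it; this representation is the only ingredient I will use.

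The support statement in $u$ is then immediate: since $\dot u=0$, each characteristic keeps $u$ constant, so a characteristic arriving at a point with $u\notin[0,1]$ starts from a point where $f_0$ vanishes and picks up only values of $g$ at points with the same $u\notin[0,1]$, where $g$ vanishes as well; thus $f\equiv0$ on $\{u\notin[0,1]\}$. The heart of the matter is the support in $v$, for which I would show that $[0,1]$ is positively invariant for the $v$-component of the flow. The two boundary inequalities are read off (\ref{acceleration2}): at $v=0$ one has $\mathcal{F}[h]=\alpha\,u\,\ell_v\,\mathcal{F}_1[h](t,x)\ge0$ since $z^*,v^*\ge0$ and $h\ge0$ (nonnegativity of $h$, which holds throughout the iterative scheme, is what is used here), while at $v=1$ the set $B^*$ is contained in $\{v^*\le1\}$, so $\mathcal{F}[h]=\alpha\,u\,\ell_v\big(\mathcal{F}_1[h]-\mathcal{F}_2[h]\big)=\alpha\,u\,\ell_v\!\int z^*(v^*-1)\,h\le0$. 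With $L$ a Lipschitz constant for $v\mapsto\mathcal{F}[h]$, a Nagumo-type estimate then gives invariance: if a characteristic has $v(s_0)=1$, then as long as $v\ge1$ one has $\tfrac{d}{ds}(v-1)=\mathcal{F}[h](s,x(s),v,u)\le L(v-1)$, and since $v(s_0)-1=0$, Gronwall forbids $v>1$ for $s>s_0$; the symmetric computation at $v=0$ forbids $v<0$. Consequently, given $(t,x,v,u)$ with $v>1$, the whole backward characteristic stays in $\{v>1\}$ (if it ever met $\{v\le1\}$, forward invariance would force $v(t)\le1$), so $f_0$ at its foot and $g$ along it all vanish, and the Duhamel formula yields $f(t,x,v,u)=0$; the case $v<0$ is identical. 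Hence $f$ vanishes for $(v,u)\notin[0,1]^2$.

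Periodicity in $x$ then follows by a uniqueness argument: the function $\tilde f(t,x,v,u):=f(t,x+1,v,u)$ solves (\ref{uno}) with source $g(t,x+1,\cdot,\cdot)$, datum $f_0(x+1,\cdot,\cdot)$ and acceleration $\mathcal{F}[h](t,x+1,\cdot,\cdot)$, and by the assumed $1$-periodicity each of these coincides with $g$, $f_0$ and $\mathcal{F}[h]$ respectively. Thus $\tilde f$ solves exactly the same linear problem as $f$, and uniqueness for (\ref{uno}) — again clear from the characteristic representation — gives $\tilde f=f$, i.e. $f(t,\cdot,v,u)$ is $1$-periodic for every $t\ge0$.

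The only genuinely delicate point is the $v$-invariance: it is there that the precise structure of $\mathcal{F}$ must be exploited — the nonnegativity of $h$ and the geometry of the truncation $B^*$ yield the correct sign of the drift on $\{v=0\}$ and $\{v=1\}$ — and one must then upgrade "inward-pointing drift on the boundary" to an honest invariance statement through the Lipschitz/Gronwall argument. The $u$-support and the $x$-periodicity are bookkeeping on top of the characteristic representation.
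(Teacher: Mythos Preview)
Your proof is correct and follows essentially the same route as the paper's: both argue via characteristics and the Duhamel representation, obtain the $v$-invariance of $[0,1]$ from the sign of $\mathcal{F}[h]$ at the endpoints (the paper phrases this as a contradiction using $\mathcal{F}_1\le\mathcal{F}_2$, where you use a cleaner Nagumo--Gronwall estimate), and deduce periodicity from uniqueness (the paper does this at the level of characteristics, you at the level of the PDE). Your explicit flag that the argument requires $h\ge 0$ is apt --- the paper uses this tacitly when asserting $\mathcal{F}_2[h]\ge\mathcal{F}_1[h]$.
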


\begin{proof} Thanks to well-known results, the solution of (\ref{uno})  exists, is unique and can be represented in terms of the characteristic curves. Denote these curves by $(X(s),V(s),U(s))$, they depend on an initial tuple $(t,x,v,u)$ and are determined as the solutions to
\begin{eqnarray}
&&X'(s)=V(s), \ V'(s)={\cal F}[h](s,X(s),V(s),U(s)), \ U'(s)=0, \quad 0\leq s\leq t , \nonumber \\ 
&&X(t)=x, \ V(t)=v, \ U(t)=u. \label{pvichar}
\end{eqnarray}
We will also use the notation $(X(s,x),V(s,v),U(s,u))$ whenever we need to stress the dependence on initial conditions. We remark that under our assumptions the transport field can be easily shown to be locally Lipschitz; this ensures that the associated charateristic curves given by (\ref{pvichar}) are well-defined objects in the classical sense. 
 
Let us show that the following equality
\begin{equation}
\Big(X(s;x)+k,V(s),u\Big)=\Big(X(s;x+k),V(s),u\Big), \ \forall k\in {\mathbb N},
\label{perio}
\end{equation}
holds. Actually, we only have to observe that the acceleration term $\mathcal{F}[h]$ is 1-periodic w.r.t. $x$ provided $h$ is. Then we note that the initial value problems verified by both sides of  equality (\ref{perio}) are the same. The uniquenes of this IVP concludes the proof of (\ref{perio}).  Note in passing that, denoting by $J(s,t)$ the Jacobian of the change $(x,v,u)\mapsto (X(s),V(s),U(s))$ (which  depends implicitly on $x$), we can deduce the same periodicity property  (\ref{perio})  for $J(s,t)$; for that we just take derivatives on (\ref{perio}) to obtain the jacobian and use the smoothness of $h$.

Next we prove the following: if $V(t)=v \notin [0,1]$ then the $V$-component of the solution of (\ref{pvichar}) verifies that $V(s)\notin [0,1]$. 
It is equivalent to show the same property for the solution of the ``forward'' Cauchy problem (\ref{pvichar}), that is, we take 
 $t=0$ and $s\geq 0$ and we show that $V(s;v) \in [0,1]$ when $V(0)=v\in [0,1]$, for all $s\geq 0$. We argue by contradiction.
Recall 
 that 
\[
\mathcal{F}[h] =  \alpha \, u\, \ell_v \,\Big (\mathcal{F}_1[h](t,x) - v\ \mathcal{F}_2[h](t,x)\Big)
 \]
and note that  $\mathcal{F}_2[h]\geq \mathcal{F}_1[h] $. Assume that, for some positive time $s>0$, we have $V(s;v)>1 $, and then define   $ s_1=\inf \{ s\geq 0 : V(s;v)>1\}$. Thus, $V(s_1;v)=1$ and there exists some $\epsilon>0$ such that $V(s_1+\epsilon;v)>1$ and $V'(s_1+\epsilon;v)>0$. We arrive to a contradiction because
\begin{eqnarray*}
0&<&V'(s_1+\epsilon;v)= \alpha \, u\, \ell_v \,\Big (\mathcal{F}_1[h](t,X(s)) - V(s_1+\epsilon;v) \mathcal{F}_2[h](t,X(s))\Big) \\
&\leq& 
 \alpha \, u\, \ell_v \,\Big (\mathcal{F}_1[h](t,X(s)) - \mathcal{F}_2[h](t,X(s))\Big)\leq 0.
\end{eqnarray*}
Analogously, arguing by contradiction we can prove that, for any positive time $s>0$, we have $V(s;v)\geq 0$, which concludes the argument. To prove the analogous property with respect to the variable $u$ is straightforward, given that $U(s)=u$ for all $s$.

Finally, we conclude the proof of Lemma \ref{lemma1} by writting the solution of (\ref{uno})  as
\begin{equation}
\label{dos}
f(t,x,v,u)=J(0,t)f_0(X(0),V(0),u)+\int_0^t J(s,t)g(s,X(s),V(s),u) ds,
\end{equation}
and using the properties of $X$, $V$ and $U$ we just proved above. 
\end{proof}

Standard transport theory results allows us to prove the next Lemma, where $\| \cdot \|_{L^p} $ stands for the norm on $ L^p([0,1]^3)$.
\begin{Lemma}[$L^p$ estimates]
\label{lemma2}
Given   $h$, $g$ and $f_0$ verifying the hypothesis of Lemma \ref{lemma1} and such that 
\[
\|f_0\|_{L^p} + \|h(t,\cdot)\|_{L^p} + \|g(t,\cdot)\|_{L^p} <C<\infty
\]
for all $t\geq 0$ and $p=1,\infty$, then, (\ref{uno}) has a unique solution given by (\ref{dos}) and verifying 
\[
 \|f(t,\cdot)\|_{L^p} <C(T)<\infty,
\]
for any $T>0$ and  $t\in [0,T]$.  \end{Lemma}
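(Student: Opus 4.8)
\emph{Proof plan.} Existence and uniqueness of a (locally Lipschitz) solution of (\ref{uno}), together with the characteristic representation (\ref{dos}), follow by the method of characteristics exactly as in the proof of Lemma \ref{lemma1}: the transport field $(v,\mathcal{F}[h],0)$ is globally Lipschitz in $(x,v,u)$ once one observes, using (\ref{acceleration2})--(\ref{accelerationII}) together with $z^*,v^*\in[0,1]$ and the $1$-periodicity of $h$ in $x$, that both $\mathcal{F}[h]$ and its divergence $\partial_v\mathcal{F}[h]$ are bounded on $[0,\infty)\times\RR\times[0,1]^2$ by a constant $C'=C'(C,\a,\ell_v,d_c)$ (the shifted integrals defining $\mathcal{F}_1[h]$ and $\mathcal{F}_2[h]$ are controlled by $\|h(t,\cdot)\|_{L^1([0,1]^3)}$, or alternatively by $\|h(t,\cdot)\|_{L^\infty([0,1]^3)}$, thanks to that periodicity). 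By Liouville's formula the Jacobian then satisfies $J(s,t)=\exp\!\big(-\!\int_s^t\partial_v\mathcal{F}[h](\tau,X(\tau),V(\tau),U(\tau))\,d\tau\big)\in[e^{-C'T},e^{C'T}]$ for $0\le s\le t\le T$. It remains to prove the two $L^p$ bounds.

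\emph{The $L^\infty$ bound.} Insert the above into (\ref{dos}). Since $\|f_0\|_{L^\infty}\le C$, $\|g(s,\cdot)\|_{L^\infty}\le C$, and the points $(X(0),V(0),u)$, $(X(s),V(s),u)$ at which $f_0$ and $g$ are evaluated are irrelevant for the $L^\infty$ norm, one gets
\[
\|f(t,\cdot)\|_{L^\infty([0,1]^3)}\le e^{C'T}\Big(\|f_0\|_{L^\infty}+\int_0^t\|g(s,\cdot)\|_{L^\infty}\,ds\Big)\le (1+T)\,C\,e^{C'T}=:C(T).
\]
(Equivalently one may apply Gr\"onwall to $s\mapsto |f(s,X(s),V(s),U(s))|$, using the nonconservative form $\tfrac{d}{ds}f=g-f\,\partial_v\mathcal{F}[h]$ of (\ref{uno}) along characteristics.)

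\emph{The $L^1$ bound.} I would integrate (\ref{dos}) in $(x,v,u)$ over $[0,1]^3$ and change variables through the flow map $(x,v,u)\mapsto(X(s),V(s),U(s))$, whose Jacobian determinant is the positive function $J(s,t)$. The three properties established in Lemma \ref{lemma1} are exactly what is needed: the $u$-component is unchanged, the $v$-component stays in $[0,1]$, and by (\ref{perio}) the flow commutes with integer shifts in $x$; hence the image of $[0,1]^3$ lies in a fundamental domain for that $\mathbb Z$-action, and for any function which is $1$-periodic in $x$ and supported in $[0,1]^2$ in $(v,u)$ its integral over that image is bounded by its $L^1([0,1]^3)$ norm. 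Since $J(s,t)$ is precisely the change-of-variables Jacobian, no exponential factor survives and
\[
\|f(t,\cdot)\|_{L^1([0,1]^3)}\le \|f_0\|_{L^1}+\int_0^t\|g(s,\cdot)\|_{L^1}\,ds\le (1+T)\,C.
\]
Alternatively, multiply (\ref{uno}) by $\operatorname{sgn}(f)$ to obtain $\partial_t|f|+v\,\partial_x|f|+\partial_v(\mathcal{F}[h]\,|f|)=\operatorname{sgn}(f)\,g$; integrating over $[0,1]^3$, the $x$-flux vanishes by periodicity and the $v$-flux vanishes because $\mathcal{F}[h]\,|f|=0$ at $v=0$ and $v=1$ ($f$ being continuous and vanishing for $v\notin[0,1]$), so $\tfrac{d}{dt}\|f(t,\cdot)\|_{L^1}\le\|g(t,\cdot)\|_{L^1}$, and one integrates. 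Finally $\|f(t,\cdot)\|_{L^p}\le\|f(t,\cdot)\|_{L^1}^{1/p}\,\|f(t,\cdot)\|_{L^\infty}^{1-1/p}\le C(T)$ for every $p\in[1,\infty]$.

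\emph{Main difficulty.} The only delicate point is the bookkeeping forced by the fact that (\ref{uno}) is posed on $\RR^3$ while the norms are taken over $[0,1]^3$: one must invoke the periodicity and support properties of Lemma \ref{lemma1} to pass between the two, both in the change of variables of the $L^1$ estimate and in the vanishing of the flux terms. The uniform control of $\mathcal{F}[h]$ and of the Jacobian is where the hypotheses on $h$ actually enter; the remainder is routine transport theory.
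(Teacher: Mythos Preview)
Your argument is correct and follows the same route as the paper: compute the Jacobian of the backward flow via Liouville's formula using the explicit expression~(\ref{accelerationII}) for $\partial_v\mathcal{F}[h]$, plug the resulting two-sided exponential bound into the characteristic representation~(\ref{dos}) for the $L^\infty$ estimate, and then observe that for $p=1$ the Jacobian factor is exactly absorbed by the change of variables, yielding the sharper bound $\|f_0\|_{L^1}+\int_0^t\|g(s)\|_{L^1}\,ds$. Your treatment is in fact more detailed than the paper's---the explicit discussion of why the image of $[0,1]^3$ under the flow integrates correctly (periodicity in $x$, invariance of $[0,1]$ in $v$ and $u$), the alternative $\operatorname{sgn}(f)$ argument, and the interpolation remark are all welcome additions that the paper leaves implicit.
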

 
\begin{proof}
 The theory of linear transport equations shows that the Jacobian satisfies $\partial_s J(s,t) = J(s,t) \partial_v(\mathcal{F}[h])(s,X(s),V(s),u)$ and $J(t,t)=1$. Then, using (\ref{accelerationII}), the equation for $J$ reads 
 $\partial_s J(s,t) = - \alpha\, J(s,t)  \, u \, \ell_v \, \mathcal{F}_2[h](s,X(s))$. Upon integration, 
 \[
 J(s,t)= \exp\Big\{-\int_t^s \alpha  \, u \, \ell_v \, \mathcal{F}_2[h](\tau,X(\tau))  d \tau\Big\}.
\]
We remark that $J$ is not compactly supported in $(v,u)$; However, all the instances of $J$ in (\ref{dos}) are multiplied by  functions that are compactly supported in $(v,u)$. Then, taking $L^p$ norms on (\ref{dos}) we obtain the desired estimate with  
 \[
 C(T)= e^{T  \|h\|_{\infty} } \|f_0 \|_p+ T e^{T  \|h\|_{\infty} }   \|g\|_{p}\:.
\]
When $p=1$ this can be refined to 
\[
C(T)=\|f_0\|_1 + \int_0^t \|g(s)\|_1 ds.
\]
 \end{proof}

 \subsection{Existence for the full traffic model} 
 Let us now address the full model (\ref{DerivedModel}), posed on the whole space $[0,\infty]\times{\mathbb R}^3$ and with a  perceived density ${{\rho_p}}$ given by the Lipschitz function introduced on on section \ref{sec31}. In order to do that,  let us consider, for $n\in {\mathbb N}$, the following iterative scheme:
  \begin{equation}
	\label{DerivedModeln}
	\partial_t f^{n+1}+ v \partial_x f^{n+1} + \partial_v ( \mathcal{F}[f^n] f^{n+1}) = g^n.
	\end{equation} 
This is the linear transport eq.  (\ref{uno}) with $h=h^n=f^n$, where $g=g^n$ is given by:
 \begin{eqnarray*}	
 g^n &:=& \int_{[0,1]^3}\eta[f^n]\mathcal{A}[f^n](v_*\to  v)\,f^{n}(t,x,v_*,u)\, f^{n}(t, x, v^*, u^*)dv_*\,dv^*\, du^*\nonumber 
 \\
&&  -f^n \int_{[0,1]^2} \eta[f^n]\, f^n(t, x, v^*, u^*)dv^*\, du^*  +\mu_n[f^n,x]\Big(f^n_e(x, v_e(x)) - f^n \Big).\nonumber
\end{eqnarray*}
We prescribe smooth initial data $f^{n+1}(0,x,v,u)=f_0^{n+1}(x,u,v)$ that are 1-periodic with respect to $x$ and whose $(v,u)$-support is contained on $[0,1]^2$. For $n=0$, we can take for example  $f^0(t,x,v,u)=f_0^{0}(x,v,u)$. 
Here we also assume that the equilibrium $f^n_e(x, v_e(x))$ is  
smooth, 1-periodic with respect to $x$ and such that $v_e(x)\in[0,1] $ for all $x$ and that 
$$\int_{[0,1]} f^n_e(x, v_e(x)) dx= \int_{[0,1]^3} f^n_0(x, v,u) dx \, dv \, du. $$ 
Moreover, we take $\mu_n[f,x]:=\mu_0^n(x) (1+\gamma_\mu \rho_p(\rho))$ with $\mu_0^n$ being smooth and 1-periodic with respect to $x$.

We impose the following convergences for the given initial datum, equilibrium density and external actions intensity function:
\begin{equation}
\label{convf0}
\begin{array}{l}
f_0^{n} \stackrel{n\to \infty}{\longrightarrow} f_0 \mbox{ in } L^1([0,1]^3), 
\quad  \|f_0^{n} \|_{L^\infty([0,1]^3)} \leq C< \infty, \  \forall n\in \NN,\\[5 pt]
f_e^{n} \stackrel{n\to \infty}{\longrightarrow} f_e \mbox{ in } L^1(0,1), 
\quad  \|f_e^{n} \|_{L^\infty(0,1)} \leq C< \infty, \  \forall n\in \NN,\\[5 pt]
\mu_0^{n} \stackrel{n\to \infty}{\longrightarrow} \mu_0 \mbox{ in } L^1(0,1), 
\quad  \|\mu_0^{n} \|_{L^\infty(0,1)} \leq C< \infty, \  \forall n\in \NN.
\end{array}
\end{equation}
The next result gives sufficient conditions to ensure that the sequence $f^n$ converges to a solution of (\ref{DerivedModel}) with $f_0$ as initial data. 
\begin{Theorem}
Let $f_0^n$,$f^n_e$ and $\mu_0^n$ smooth sequences of initial datum, equilibrium densities and external actions intensity function respectively, all of them being 1-periodic with respect to $x$ and verifying (\ref{convf0}). Let the sequence $f_0^n$ be supported on $[0,1]^2$ with respect to $(v,u)$.
Then, by assuming the $L^\infty ([0,\infty]; L^1( [0,1]^3))$-convergence of the sequence $f^n$ of solutions of (\ref{DerivedModeln}), we can ensure that the limit $f$ solves (\ref{DerivedModel}) in a weak sense.
\end{Theorem}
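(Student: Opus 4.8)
The plan is to pass to the limit $n\to\infty$ in the weak formulation of the linear problems~(\ref{DerivedModeln}): since by hypothesis $f^n\to f$ in $L^\infty([0,\infty];L^1([0,1]^3))$, the argument reduces to identifying, term by term, the limit of that formulation.

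Before doing so, one checks that the scheme is well posed and provides the bounds we shall need. Each step~(\ref{DerivedModeln}) is an instance of~(\ref{uno}) with $h=f^n$ and source $g=g^n$; arguing by induction, $g^n$ inherits from $f^n$ the smoothness, the $1$-periodicity in $x$ and the $(v,u)$-support in $[0,1]^2$ required by Lemmas~\ref{lemma1} and~\ref{lemma2}, so that $f^{n+1}$ is well defined, is given by~(\ref{dos}), and enjoys the same structural properties. Moreover, because $0\le\rho_p[\rho]\le1$ by construction (Section~\ref{sec31}), the coefficients $\eta[f^n]$ and $\mu_n[f^n,x]$ are uniformly bounded in $n$; together with~(\ref{convf0}) this yields, through Gr\"onwall-type estimates applied to~(\ref{dos}) and to the refined $L^1$ bound of Lemma~\ref{lemma2}, bounds on $f^n$ in $L^\infty_{loc}([0,\infty);L^1([0,1]^3)\cap L^\infty([0,1]^3))$ that are uniform in $n$; we use these freely below. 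Finally, since $L^1$-convergence preserves them, the limit $f$ is $1$-periodic in $x$ and supported in $[0,1]^2$ with respect to $(v,u)$, hence it is a legitimate candidate solution of the original problem posed on $[0,\infty]\times[0,1]^3$.

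Now the limit passage. Fix a test function $\psi=\psi(t,x,v,u)$, smooth, $1$-periodic in $x$ and compactly supported in $t$, $v$ and $u$. Multiplying~(\ref{DerivedModeln}) by $\psi$ and integrating by parts over one period in $x$ gives
\[
-\int\!\!\int f^{n+1}\big(\partial_t\psi+v\,\partial_x\psi+\mathcal{F}[f^n]\,\partial_v\psi\big)
-\int f_0^{n+1}\,\psi(0,\cdot)
=\int\!\!\int g^n\,\psi .
\]
The transport terms and the datum term are linear, so they converge directly from the $L^1$-convergence of $f^{n+1}$, resp.\ of $f_0^{n+1}$ via~(\ref{convf0}) (using that $v$ and $\psi$ are bounded); in particular the limit attains $f_0$ weakly. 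For the acceleration term, $\mathcal{F}[\cdot]$ is linear and, after the change of variables leading to~(\ref{acceleration2}), $\|\mathcal{F}[f^n]-\mathcal{F}[f]\|_{L^\infty}\le C\,\ell_v^{-1}\,\|f^n-f\|_{L^\infty_tL^1}\to0$ while $\mathcal{F}[f^n]$ is uniformly bounded; combined with $f^{n+1}\to f$ in $L^1$ this gives $\mathcal{F}[f^n]f^{n+1}\to\mathcal{F}[f]f$ in $L^1$. For the relaxation term, write $\mu_n=\mu_0^n(1+\gamma_\mu\rho_p[\rho^n])$ and note that $\rho^n\to\rho$ in $L^1$ forces $\rho_p[\rho^n]\to\rho_p[\rho]$ in $L^1$ by the \emph{Lipschitz} character of $\rho\mapsto\rho_p[\rho]$; with $\mu_0^n\to\mu_0$, $f_e^n\to f_e$, $f^n\to f$ and the uniform bounds, this term converges to its analogue built on $f$. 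The loss term $-f^{n+1}\int\eta[f^n]f^n$ is quadratic: the uniform $L^\infty$ bound and $L^1$-convergence give, by interpolation, $f^n\to f$ in $L^2([0,T]\times[0,1]^3)$, so the bilinear product converges in $L^1$, and since $\eta[f^n]\to\eta[f]$ in $L^1$ and is bounded, the loss term converges.

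The remaining term --- the gain term $\int\eta[f^n]\,\mathcal{A}[f^n](v_*\!\to\!v)\,f^n(t,x,v_*,u)\,f^n(t,x,v^*,u^*)$ --- is where the difficulty concentrates, and I expect the stability of the transition probability density $\mathcal{A}$ to be the main obstacle. By~(\ref{Table2}), $\mathcal{A}[f^n]$ depends on $f^n$ only through the perceived density, entering via $P=\alpha u(1-\rho_p)$, the variances $\sigma_1=\kappa\alpha u(1-\rho_p)$ and $\sigma_2=\kappa(1-P)$, the endpoints $v_m,v_M$, and the partition functions $Z(\sigma_1),Z(\sigma_2),Z_1,Z_2$; since $\rho_p[\rho^n]\to\rho_p[\rho]$ in $L^1$ and each of these quantities is a continuous function of $\rho_p$, a subsequence-plus-uniqueness argument with dominated convergence yields $\mathcal{A}[f^n]\to\mathcal{A}[f]$ in $L^1$ with uniform bound, whence, paired with the $L^2$-convergence of the quadratic factor and the convergence of $\eta[f^n]$, the gain term passes to the limit. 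The delicate points here are two: bounding the partition functions from below uniformly (which is exactly where the strict inequalities $0<v_m<v_M<1$ recorded right after the definition of the endpoints are used), and the mild degeneracy of $\mathcal{A}$ as $\rho_p\to1$, where $\sigma_1\to0$ and the Gaussian profile collapses onto a Dirac mass at $v_*$; to cope with the latter one either restricts to the regime $\rho_p$ bounded away from $1$ or interprets $\mathcal{A}$ weakly in that limit, which is consistent since the conclusion is stated in the weak sense. Gathering all the limits, the identity above becomes the weak formulation of~(\ref{DerivedModel}) with datum $f_0$, and by the periodicity and support properties inherited by $f$ this is a weak solution of the original model on $[0,\infty]\times[0,1]^3$, which is the claim.
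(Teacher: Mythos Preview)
Your argument follows the same overall scheme as the paper's: propagate periodicity and $(v,u)$-support through the iteration via Lemmas~\ref{lemma1}--\ref{lemma2}, write the weak formulation of~(\ref{DerivedModeln}), and pass to the limit term by term. The one substantive difference is in how you handle the bilinear terms: you interpolate $L^1\cap L^\infty\hookrightarrow L^2$ to get convergence of products, which relies on uniform-in-$n$ $L^\infty$ bounds on $f^n$ that do not follow from a straightforward Gr\"onwall argument (the estimate of Lemma~\ref{lemma2} reads $\|f^{n+1}\|_\infty\le e^{T\|f^n\|_\infty}(\|f_0^{n+1}\|_\infty+T\|g^n\|_\infty)$, and $\|g^n\|_\infty$ itself is quadratic in $\|f^n\|_\infty$, so the recursion is nonlinear and your ``Gr\"onwall-type estimates'' step deserves more care). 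The paper instead treats the quadratic terms by invoking convergence of the product measures $f^{n+1}\otimes f^n$ in $C(0,T;\mathcal{M}\text{-weak}^*)$, citing~\cite{[BCNS]}, which sidesteps any need for $L^\infty$ control. On the gain term you are in fact more scrupulous than the paper, which simply asserts boundedness and pointwise plus $L^\infty$-weak$^*$ convergence of $\mathcal{A}[f^n]$ without addressing the degeneracy $\sigma_1\to 0$ as $\rho_p\to 1$ that you correctly flag.
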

\begin{proof} 
We can prove by induction that $h^n$ and $g^n$ verify the hypothesis described on Lemma \ref{lemma1} and \ref{lemma2}. Therefore, the sequence $f^n$ is 1-periodic with respect to $x$ and supported on $[0,1]^2$ with respect to $(v,u)$. In order prove that its limit is a solution of  (\ref{DerivedModel}), we start from the weak formulation of (\ref{DerivedModeln}); let a test function $\Phi  \in C_0^\infty ([0,\infty)\times (0,1)^3)$, then we have
\begin{eqnarray*}
0&=&\int_{[0,1]^3} f^{n+1}_0\Phi  (0,x,v,u) dxdvdu + \int_0^\infty\!\!\!\!\int_{[0,1]^3} f^{n+1} 
\Big( \partial_t \Phi  +v\partial_x \Phi  \Big) dt dxdvdu \\
&&+ \int_0^\infty\!\!\!\!\int_{[0,1]^3}  f^{n+1}   \mathcal{F}[f^n] \partial_v \Phi \, dt dxdvdu  
+ \int_0^\infty\!\!\!\!\int_{[0,1]^3} \mu[f^n,x]\Big(f^n_e- f^n \Big) \Phi \, dt dxdvdu    \\
&&+ \int_0^\infty\!\!\!\!\int_{[0,1]^6} \eta[f^n]\mathcal{A}[f^n] \,f^{n}(t,x,v_*,u)\, f^{n}(t, x, v^*, u^*)  \Phi \, dv_*\,dv^*\, du^*  dt dxdvdu\nonumber 
 \\
&&  -  \int_0^\infty\!\!\!\!\int_{[0,1]^5}  \eta[f^n]\,  f^n(t,x,v,u) f^n(t, x, v^*, u^*) \Phi \, dv^*\, du^*  dt dxdvdu.
\end{eqnarray*}
Using the convergence hypothesis for $f^n$ and the boundness of $\vf$, $\mathcal{A}[f^n]$, $\eta[f^n]$ and $\mu_n[f^n,x]$, it is easy to deduce that each term above converges. To illustrate it, let us show for example the convergence of the third term, which reads
\[
\int_0^\infty\!\!\!\!\int_{[0,1]^3}  f^{n+1}   \mathcal{F}[f^n] \partial_v \Phi \, dt dxdvdu 
\]
\[
 =
\int_0^\infty\!\!\!\!\int_{[0,1]^3}\! \int_{\Lambda}  \partial_v \Phi \vf  f^{n+1} (t,x,v,u)   f^n(t,x^*,v^*,u^*)\,dx^* dv^* du^* dt dxdvdu.
\]
We use now standard results for transport equations, e.g. the results in \cite{[BCNS]}, to deduce the convergence of the product measure,
\[
f^{n+1} (t,x,v,u)   f^n(t,x^*,v^*,u^*) \to f(t,x,v,u)   f(t,x^*,v^*,u^*) \ in \ C(0,T;{\cal M}([0,1]^6)-weak-*),
\]
Actually we can show that in this case the convergence takes place in $C(0,T;L^1([0,1]^6))$. This allows us to pass to the limit, thus obtaining
\[
\int_0^\infty\!\!\!\!\int_{[0,1]^3}  f^{n+1}   \mathcal{F}[f^n] \partial_v \Phi\,  dt dxdvdu   \to 
\int_0^\infty\!\!\!\!\int_{[0,1]^3}  f   \mathcal{F}[f] \partial_v \Phi \, dt dxdvdu.
\]
The rest of the terms are handled in a similar way. Note here that, thanks to the boundness of the perceived density $\rho_p[\rho]$ and (\ref{convf0}), the three $\mathcal{A}[f^n]$, $\eta[f^n]$ and $\mu_n[f^n,x]$ converge pointwise and in $L^\infty([0,\infty]\times [0,1]^3)-weak^*$ to 
$\mathcal{A}[f]$, $\eta[f]$ and $\mu[f,x]$ respectively. Thus, we conclude the convergence of all the terms and then,
\begin{eqnarray*}
0&=&\int_{[0,1]^3} f_0\Phi  (0,x,v,u) dxdvdu + \int_0^\infty\!\!\!\!\int_{[0,1]^3} f 
\Big( \partial_t \Phi  +v\partial_x \Phi  \Big) dt dxdvdu \\
&&+ \int_0^\infty\!\!\!\!\int_{[0,1]^3}  f   \mathcal{F}[f] \partial_v \Phi \, dt dxdvdu  
+ \int_0^\infty\!\!\!\!\int_{[0,1]^3} \mu[f,x]\Big(f_e- f \Big) \Phi \, dt dxdvdu    \\
&&+ \int_0^\infty\!\!\!\!\int_{[0,1]^6} \eta[f]\mathcal{A}[f] \,f(t,x,v_*,u)\, f(t, x, v^*, u^*)  \Phi \, dv_*\,dv^*\, du^*  dt dxdvdu\nonumber 
 \\
&&  -  \int_0^\infty\!\!\!\!\int_{[0,1]^5}  \eta[f]\,  f(t,x,v,u) f(t, x, v^*, u^*) \Phi \, dv^*\, du^*  dt dxdvdu,
\end{eqnarray*}
that is, $f$ is a weak solution of (\ref{DerivedModel}) with initial datum $f_0$.
\end{proof}


\medskip \noindent {\it Acknowledgements.} 
J.C. and J.N. are partially supported by Junta de Andaluc\'ia Project P12-FQM-954 and MINECO Project~RTI2018-098850-B-I00. J.C. is supported by Universidad de Granada (``Plan propio de investigaci\'on, programa 9'') through FEDER funds. M.Z. was supported by CNRST (Morocco), project ``Mod\`eles Math\'ematiques appliqu\'es \`a l’environnement, \`a l’imagerie m\'edicale et aux biosyst\`emes''.


\end{document}